\theoremstyle{plain}
\newcommand{\esp}{\mathbb{E}} 
\newcommand{\proba}{\mathbb{P}}
\theoremstyle{plain}
\newtheorem{Thm}{\bf Theorem}[section]
\newtheorem{Pro}[Thm]{\bf Proposition}
\newtheorem{Lem}[Thm]{\bf Lemma}
\newtheorem{Cor}[Thm]{\bf Corollary}
\newtheorem{Rem}[Thm]{Remark}
\newenvironment{proof}[1][{\it Proof.}]{\begin{trivlist}
\item[\hskip \labelsep {\bfseries #1}]}
{ {\quad}\hfill $\Box$\end{trivlist}\vskip
-0.2 cm}
\begin{document}
\thispagestyle{empty}
\title{Zero bias transformation and asymptotic expansions II :\\
the Poisson case}
\author{Ying Jiao\thanks{Laboratoire de probabilit\'es et mod\`eles al\'eatoires,
Universit\'e Paris 7; jiao@math.jussieu.fr}}
\date{\today}
\maketitle

\begin{abstract}
We apply a discrete version of the methodology in \cite{gauss} to obtain
a recursive asymptotic expansion for $\esp[h(W)]$  in terms of Poisson expectations, where $W$ is a sum of independent integer-valued random variables and $h$ is a polynomially growing function. We also discuss the remainder estimations.

\vspace{3mm}

\noindent MSC 2000 subject classifications: 60G50, 60F05.\\
Key words:  Poisson approximation, zero bias transformation, Stein's method, asymptotic expansions, discrete reverse Taylor formula.
\end{abstract}

\section{Introduction and main result}

It should be noted in the first place that the notation
used in this paper is similar as in \cite{gauss},
however, their meanings are different since we here
consider discrete random variables. Stein's method for
Poisson appoximation has been introduced by Chen
\cite{Chen1975}. Let $Z$ be an $\mathbb N$-valued
random variable ($\mathbb N$-r.v.), then $Z$ follows
the Poission distribution  with parameter $\lambda$ if
and only if the equality
$\mathbb E[Zf(Z)]=\lambda\esp[f(Z+1)]$
holds for any function $f:\mathbb N\rightarrow\mathbb
R$ such that both sides of the equality are well defined. Based on this observation,
Chen has proposed the following discrete
Stein's equation:
\begin{equation}\label{Equ:stein'sequation}
xf(x)-\lambda f(x+1)=h(x)-\mathcal P_\lambda(h), \quad x\in\mathbb N
\end{equation}
where $\mathcal P_\lambda(h)$ is the expectation of $h$
with respect to the $\lambda$-Poisson distribution. If
$X$ is an $\mathbb N $-r.v., one has
$\esp[h(X)]-\mathcal
P_\lambda(h)=\esp[Xf_h(X)-f_h(X+1)]$ where $f_h$ is a
solution of \eqref{Equ:stein'sequation} and  is given
as
\begin{equation}\label{equ:solution stein}
f_h(x)=\frac{(x-1)!}{\lambda^x}\sum_{i=x}^\infty\frac{\lambda^i}{
i!}\big(h(i)-\mathcal P_\lambda(h)\big).\end{equation}
The value $f_h(0)$ can be arbitrary and is not used in calculations in general.

Stein's method has been adopted for Poisson
approximation problems since \cite{Chen1975} in a
series of papers such as \cite{AGG1989},
\cite{BCC1995}, \cite{BarCek2002} among many others,
one can also consult the monograph \cite{BHJ1992} and
the survey paper \cite{Er2005}. In particular, Barbour
\cite{Ba1987} has developed, in parallel with the
normal case \cite{Ba1986},  asymptotic expansions for
sum of independent $\mathbb N$-r.v.s and for
polynomially growing functions. The asymptotic
expansion problem has also been studied by using other
methods such as Lindeberg method (e.g. \cite{BR2002}).

In this paper, we address this problem by the zero bias transformation approach. Similar as in Goldstein
and Reinert \cite{GR1997}, we introduce  a
discrete analogue of zero bias transformation (see also \cite{EJ2007}).
Let $X$ be an
$\mathbb N$-r.v.
with expectation $\lambda$. We say that an
$\mathbb N$-r.v. $X^*$ has Poisson
$X$-zero biased distribution if the equality
\begin{equation}\label{zero bias}
\esp[Xf(X)]=\lambda\esp[f(X^*+1)]\end{equation} holds
for any function $f:\mathbb N\rightarrow\mathbb R$ such
that the left side of \eqref{zero bias} is well
defined. The distribution of $X^*$ is unique: one has
$\mathbb P(X^*=x)=(x+1)\mathbb P(X=x+1)/\lambda$.
Combining Stein's equation \eqref{Equ:stein'sequation}
and zero bias transformation \eqref{zero bias}, the
error of the Poisson approximation can be written as
\begin{equation}\label{equ:}\mathbb E[h(X)]-\mathcal
P_\lambda(h) = \lambda\esp[f_h(X^*+1)-f_h(X+1)].\end{equation} A first order correction term for the Poisson approximation has been proposed in \cite{EJ2007} by using the Poisson zero bias transformation.

Recall the difference operator $\Delta$ defined as $\Delta f(x)=f(x+1)-f(x)$. For any
$x\in\mathbb N_*:=\mathbb N\setminus\{0\}$ and any $n\in\mathbb N$, one has
$\Delta\binom{x}{n}=\binom{x}{n-1}$. If $f$ and $g$ are two functions on $\mathbb N$,
then
$$\Delta(f(x)g(x))=f(x+1)\Delta
g(x)+g(x)\Delta f(x).$$
We have the Newton's expansion (\cite[Thm5.1]{BarCek2002}), which can be viewed as an analogue of the Taylor's expansion in the discrete case.
For all $x, y\in\mathbb N$ and $N\in\mathbb N$,
\[
f(x+y)=\sum_{j=0}^N\binom{y}{j}\Delta^jf(x)+
\sum_{0\leq
j_1<\cdots<j_{N+1}<y}\Delta^{N+1}f(x+j_1).\]

Let us introduce the following quantity, where we use the same notation as in \cite{gauss},  but its meaning is changed. For any $\mathbb N$-r.v. $Y$ and any
$k\in\mathbb N$ such that $\esp[|Y|^k]<+\infty$, denote
by \begin{equation}\label{equ:m Y k}m_Y^{(k)}:=\esp\Big[\binom{Y}{k}\Big]=k!\,[Y]_k\end{equation} where $[Y]_k$ is the $k^{\text{th}}$ factorial moment of $Y$. Let $X$ and
$Y$ be two independent $\mathbb N$-r.v.s and
$f:\mathbb N\rightarrow\mathbb R$ such that
$\Delta^kf(X)$ and $\Delta^kf(X+Y)$ are both
integrable, then
\begin{equation}\label{Equ:discrete taylor formula}
\esp[f(X+Y)]=\sum_{k=0}^Nm_Y^{(k)}
\esp[\Delta^kf(X)]+\delta_N(f,X,Y).
\end{equation}
where
\begin{equation}\label{Equ:calcul de Delta}\delta_N(f,X,Y)=\esp\Big[\sum_{0\leq
j_1<\cdots<j_{N+1}<Y}
\Delta^{N+1}f(X+j_1)\Big].\end{equation}

We introduce the discrete reverse Taylor formula. Once
again, the following result is very similar with
\cite[Pro1.1]{gauss}, however, with different
significations of notation.
\begin{Pro}\label{Lem:reverse taylor}{\bf(discrete reverse Taylor formula)}
With the above notation, we have
\begin{equation}\label{Equ:reverse taylor fomrul}
\esp[f(X)]=\sum_{d\geq
0}(-1)^d\hspace{-4mm}\sum_{\mathbf{J}\in\mathbb
N^d_*,\, |\mathbf{J}|\leq N
}\hspace{-4mm}m_Y^{(\mathbf{J})}\esp[\Delta^{|\mathbf{J}|}
f(X+Y)]+ \varepsilon_N(f,X,Y)
\end{equation}
where
\begin{equation}\label{Equ:relation between esp and dleta}
\varepsilon_N(f,X,Y):=-\sum_{d\geq 0}(-1)^d
\hspace{-4mm}\sum_{\mathbf{J}\in\mathbb N^d_*,\,
|\mathbf{J}|\leq N
}\hspace{-4mm}m_Y^{(\mathbf{J})}\delta_{N-|\mathbf{J}|}(\Delta^{|\mathbf{J}|}
f,X,Y),
\end{equation}for any integer $d\geq 1$ and any
$\mathbf{J}=(j_l)_{l=1}^d\in\mathbb N_*^d$,
$|\mathbf{J}|=j_1+\cdots+j_d$ and
$m_Y^{(|\mathbf{J}|)}:=m_Y^{(j_1)}\cdots
m_Y^{(j_d)}$, and by convention, $\mathbb
N_*^0=\{\emptyset\}$ with $|\emptyset|=0$,
$m_Y^{(\emptyset)}=1$.
\end{Pro}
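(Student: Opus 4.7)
The plan is to obtain the formula by iterated substitution of the forward Taylor expansion \eqref{Equ:discrete taylor formula} into itself. Rewriting \eqref{Equ:discrete taylor formula} by isolating the $k=0$ term gives the base identity
\[
\esp[f(X)]=\esp[f(X+Y)]-\sum_{j=1}^{N}m_Y^{(j)}\esp[\Delta^{j}f(X)]-\delta_{N}(f,X,Y),
\]
which is exactly the claim for $d=1$ (the $d=0$ contribution, with the convention $\mathbf{J}=\emptyset$, yielding $\esp[f(X+Y)]$, and a single remainder term). To push to depth $d+1$, I would substitute into each term $\esp[\Delta^{|\mathbf{J}|}f(X)]$ appearing at depth $d$ the same identity applied to $\Delta^{|\mathbf{J}|}f$, at reduced order $N-|\mathbf{J}|$. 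Each such substitution contributes one new $\delta_{N-|\mathbf{J}|}(\Delta^{|\mathbf{J}|}f,X,Y)$ term and a sum over a new index $j_{d+1}\in\mathbb N_*$, which concatenates onto $\mathbf{J}$ to produce the tuples of length $d+1$.

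Formally I would proceed by induction on $d\geq 1$ and establish the decomposition
\[
\esp[f(X)]=\sum_{d'=0}^{d-1}(-1)^{d'}\hspace{-3mm}\sum_{\substack{\mathbf{J}\in\mathbb N_*^{d'}\\ |\mathbf{J}|\leq N}}\hspace{-3mm}m_Y^{(\mathbf{J})}\esp[\Delta^{|\mathbf{J}|}f(X+Y)]+A_d+B_d,
\]
where $A_d=(-1)^d\sum_{\mathbf{J}\in\mathbb N_*^d,\,|\mathbf{J}|\leq N}m_Y^{(\mathbf{J})}\esp[\Delta^{|\mathbf{J}|}f(X)]$ is the residual ``$\esp[\cdot(X)]$'' tail and $B_d$ is exactly the partial sum appearing in \eqref{Equ:relation between esp and dleta} truncated to $d'<d$. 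The induction step amounts to applying \eqref{Equ:discrete taylor formula} inside $A_d$; the product structure $m_Y^{(\mathbf{J})}=m_Y^{(j_1)}\cdots m_Y^{(j_d)}$ comes out automatically because the inner sum over $j_{d+1}$ multiplies the outer product $m_Y^{(j_1)}\cdots m_Y^{(j_d)}$ of coefficients.

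The crucial observation that makes the series in \eqref{Equ:reverse taylor fomrul} and \eqref{Equ:relation between esp and dleta} actually finite is that every $j_l\in\mathbb N_*$ satisfies $j_l\geq 1$, so $|\mathbf{J}|\geq d$ for $\mathbf{J}\in\mathbb N_*^d$. Consequently the set $\{\mathbf{J}\in\mathbb N_*^d:|\mathbf{J}|\leq N\}$ is empty as soon as $d\geq N+1$, which both collapses the sums over $d$ to $d\in\{0,1,\ldots,N\}$ and, taking $d=N+1$ in the induction, makes $A_{N+1}=0$. This yields the identity with remainder exactly $B_{N+1}=\varepsilon_N(f,X,Y)$.

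The main obstacle is bookkeeping rather than analysis: verifying that repeated substitution really produces each ordered tuple $\mathbf{J}=(j_1,\ldots,j_d)$ with multiplicity one and with the correct sign $(-1)^d$, and that the $\delta$-remainders line up as in \eqref{Equ:relation between esp and dleta}. Integrability is not a concern since the expansion terminates after finitely many steps, so no convergence of an infinite series has to be invoked; once the inductive identity above is verified for $d=N+1$, rearranging it gives \eqref{Equ:reverse taylor fomrul} directly.
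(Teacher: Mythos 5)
Your argument is correct, and it rests on exactly the same two ingredients as the paper's proof: substitution of the forward discrete Taylor formula \eqref{Equ:discrete taylor formula} applied to $\Delta^{|\mathbf{J}|}f$ at order $N-|\mathbf{J}|$, the concatenation reindexing $\mathbf{J}'=(\mathbf{J},k)$ with sign $(-1)^d$, and the observation that $|\mathbf{J}|\geq d$ for $\mathbf{J}\in\mathbb N_*^d$ so that everything terminates at depth $N+1$. The difference is purely one of organization. The paper runs the computation backwards: it takes the right-hand side of \eqref{Equ:reverse taylor fomrul} as given, replaces each $\esp[\Delta^{|\mathbf{J}|}f(X+Y)]$ by its forward expansion, cancels the $\delta$-terms against $\varepsilon_N$ by construction, and shows that after splitting $k=0$ from $k\geq 1$ and reindexing, the whole expression telescopes down to the single term $\esp[f(X)]$. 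You run it forwards: starting from $\esp[f(X)]$ you build the formula by induction on the depth $d$, carrying along the residual tail $A_d$ and the accumulated remainder $B_d$, and stop when $A_{N+1}$ is empty. Your version is slightly longer to write out rigorously (one must verify the inductive bookkeeping of signs, multiplicities, and the $\delta$-terms at each depth, which you correctly flag as the only real work) but it is constructive --- it explains where the alternating sum over tuples and the definition \eqref{Equ:relation between esp and dleta} of $\varepsilon_N$ come from, rather than pulling them out of a hat and verifying them. The paper's one-shot verification is shorter precisely because it presupposes the answer. Both are complete proofs; no gap.
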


Consider now a family of independent $\mathbb N$-r.v.s $X_i \,(i=1,\cdots n)$ with expectations $\lambda_i$,
which are ``sufficiently good'' in a sense we shall precise later.
Let $W=X_1+\cdots+X_n$ and denote
$\lambda_W:=\esp[W]=\lambda_1+\cdots+\lambda_n$.
Let $W^{(i)}=W-X_i$
and $X_i^*$ be an
$\mathbb N$-r.v., independent of $W^{(i)}$ and
which has the Poisson  $X_i$-zero biased
distribution. Finally, let $I$ be a random index valued in
$\{1,\cdots,n\}$ which is independent of
$(X_1,\cdots,X_n,X_1^*,\cdots,X_n^*)$ and such that
$\proba(I=i)=\lambda_i/\lambda_W$ for any $i$. Then, similar as in \cite{GR1997}, the random variable $W^*:= W^{(I)}+X_I^*$ follows the Poisson $W$-zero biased
distribution.

We give below the asymptotic expansion formula in the Poisson case.

\begin{Thm}\label{Thm:main theorem}
Let $N\in\mathbb N$ and $p\geq 0$. Let $h:\mathbb
N\rightarrow\mathbb R$ be a function which is of
$O(x^p)$ at infinity and $X_i\,(i=1,\cdots,n)$ be a
family of independent $\mathbb N$-r.v.s having up to
$(N+p+1)^{\text{th}}$ order moments. Let
$W=X_1+\cdots+X_n$ and $\lambda_W=\esp[W]$. Then
$\esp[h(W)]$ can be written as the sum of two terms
$C_N(h)$ and $e_N(h)$ such that $C_0(h)=\mathcal
P_{\lambda_W}(h)$ and $e_0(h)=\esp[h(W)]-\mathcal
P_{\lambda_W}(h)$, and recursively for any $N\geq 1$,
\begin{equation}\label{Equ:recursive CN}
C_N(h)=C_0(h)+\sum_{i=1}^n\lambda_i\sum_{d\geq
1}(-1)^{d-1}\hspace{-5mm}\sum_{\mathbf{J}\in\mathbb
N_*^d,\, |\mathbf{J}|\leq N}\hspace{-4mm}
m_{X_i}^{(\mathbf J^\circ)}\big(m_{X_i^*}^{(\mathbf
J^\dagger)}-m_{X_i}^{(\mathbf{J}^\dagger)}\big)
C_{N-|\mathbf{J}|}(\Delta^{|\mathbf{J}|}f_h(x+1)),
\end{equation}
\begin{equation}\label{Equ:recursive eN}\begin{split}
e_N(h)&=\sum_{i=1}^n\lambda_i\Big[ \sum_{d\geq
1}(-1)^{d-1}\hspace{-5mm}\sum_{\mathbf{J}\in\mathbb
N_*^d,\,|\mathbf{J}|\leq
N}\hspace{-4mm}m_{X_i}^{(\mathbf
J^\circ)}\big(m_{X_i^*}^{(\mathbf
J^\dagger)}-m_{X_i}^{(\mathbf{J}^\dagger)}\big)e_{N-|\mathbf{J}|}
(\Delta^{|\mathbf{J}|}f_h(x+1))\\& \qquad
+\sum_{k=0}^Nm_{X_i^*}^{(k)}\,\varepsilon_{N-k}(\Delta^kf_h(x+1),W^{(i)},X_i)
+\delta_N(f_h(x+1),W^{(i)},X_i^*)\Big],
\end{split}\end{equation}
where for any integer $d\geq 1$ and any
$\mathbf{J}\in\mathbb N_*^d$, $\mathbf{J}^\dagger\in
\mathbb N_*$ denotes the last coordinate of
$\mathbf{J}$, and $\mathbf{J}^\circ$ denotes the
element in $\mathbb N_*^{d-1}$ obtained from
$\mathbf{J}$ by omitting the last coordinate.
\end{Thm}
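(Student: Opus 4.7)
The plan is to proceed by induction on $N$. The case $N=0$ is simply the definition $C_0(h)=\mathcal{P}_{\lambda_W}(h)$, $e_0(h)=\esp[h(W)]-\mathcal{P}_{\lambda_W}(h)$, so nothing needs to be shown. For the inductive step at level $N\geq 1$, I would start from the zero-bias identity already derived in the introduction, namely
$$\esp[h(W)]-\mathcal{P}_{\lambda_W}(h) \;=\; \lambda_W\,\esp\bigl[f_h(W^{*}+1)-f_h(W+1)\bigr],$$
and use the representation $W^{*}=W^{(I)}+X_I^{*}$ with $\proba(I=i)=\lambda_i/\lambda_W$ together with $\lambda_W=\sum_i\lambda_i$ to decompose this as
$$\esp[h(W)] \;=\; \mathcal{P}_{\lambda_W}(h)+\sum_{i=1}^n\lambda_i\,\esp\bigl[f_h(W^{(i)}+X_i^{*}+1)-f_h(W+1)\bigr].$$
Everything that follows consists in expanding each summand using the two Taylor-type tools already available.

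For each $i$, I would first apply the discrete Taylor formula \eqref{Equ:discrete taylor formula} viewing $W^{(i)}+1$ as the base point and $X_i^{*}$ as the perturbation, obtaining
$$\esp\bigl[f_h(W^{(i)}+X_i^{*}+1)\bigr]=\sum_{k=0}^N m_{X_i^{*}}^{(k)}\,\esp\bigl[\Delta^k f_h(W^{(i)}+1)\bigr]+\delta_N(f_h(\cdot+1),W^{(i)},X_i^{*}).$$
Then for each $k=0,\ldots,N$, I would use the reverse Taylor formula of Proposition \ref{Lem:reverse taylor} applied to $\Delta^k f_h(\cdot+1)$ with the couple $(W^{(i)},X_i)$ at order $N-k$, which, using $W^{(i)}+X_i=W$, rewrites $\esp[\Delta^k f_h(W^{(i)}+1)]$ as $\sum_{d\geq 0}(-1)^d\sum_{\mathbf{J}}m_{X_i}^{(\mathbf{J})}\esp[\Delta^{|\mathbf{J}|+k}f_h(W+1)]$ plus the remainder $\varepsilon_{N-k}(\Delta^k f_h(\cdot+1),W^{(i)},X_i)$. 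At this stage all the main-order terms involve only quantities of the form $\esp[\Delta^{|\tilde{\mathbf{J}}|}f_h(W+1)]$ for some tuple $\tilde{\mathbf{J}}$, and the residuals $\varepsilon$ and $\delta$ already match the shape appearing in \eqref{Equ:recursive eN}.

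The combinatorial heart of the argument is to recognize that, after substitution and subtraction of $\esp[f_h(W+1)]$ (which cancels the $(k=0,d=0,\mathbf{J}=\emptyset)$ contribution), the coefficient of $\esp[\Delta^{|\tilde{\mathbf{J}}|}f_h(W+1)]$ for a tuple $\tilde{\mathbf{J}}\in\mathbb{N}_*^{\tilde{d}}$ with $\tilde{d}\geq 1$, $|\tilde{\mathbf{J}}|\leq N$, receives exactly two contributions: the pair $(k=\tilde{\mathbf{J}}^\dagger\geq 1,\,\mathbf{J}=\tilde{\mathbf{J}}^\circ,\,d=\tilde{d}-1)$ gives $(-1)^{\tilde{d}-1}m_{X_i}^{(\tilde{\mathbf{J}}^\circ)}m_{X_i^{*}}^{(\tilde{\mathbf{J}}^\dagger)}$, while the pair $(k=0,\,\mathbf{J}=\tilde{\mathbf{J}},\,d=\tilde{d})$ gives $(-1)^{\tilde{d}}m_{X_i}^{(\tilde{\mathbf{J}})}$. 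The multiplicativity $m_{X_i}^{(\tilde{\mathbf{J}})}=m_{X_i}^{(\tilde{\mathbf{J}}^\circ)}m_{X_i}^{(\tilde{\mathbf{J}}^\dagger)}$ collapses these two into $(-1)^{\tilde{d}-1}m_{X_i}^{(\tilde{\mathbf{J}}^\circ)}\bigl(m_{X_i^{*}}^{(\tilde{\mathbf{J}}^\dagger)}-m_{X_i}^{(\tilde{\mathbf{J}}^\dagger)}\bigr)$, producing exactly the structure of \eqref{Equ:recursive CN}--\eqref{Equ:recursive eN}. Because $|\tilde{\mathbf{J}}|\geq 1$, the induction hypothesis applies to $\esp[\Delta^{|\tilde{\mathbf{J}}|}f_h(W+1)]$ and splits it into its $C_{N-|\tilde{\mathbf{J}}|}$ and $e_{N-|\tilde{\mathbf{J}}|}$ components; collecting the $C$-pieces defines $C_N(h)$, and collecting the $e$-pieces together with the $\varepsilon$ and $\delta$ remainders defines $e_N(h)$.

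I expect the main obstacle to be precisely this bookkeeping step — the fusion of the sums over $(k,\mathbf{J},d)$ into a single sum over $\tilde{\mathbf{J}}$ with the correct signs, and the identification of the $(k=0,d=0)$ term as the one cancelled by $\esp[f_h(W+1)]$. Everything else is either a direct application of the two Taylor formulas or a matter of grouping. A minor technical point is to verify that the moment assumption (existence of $(N+p+1)^{\text{th}}$ moments for each $X_i$, which through the formula $\proba(X_i^{*}=x)=(x+1)\proba(X_i=x+1)/\lambda_i$ yields one fewer moment for $X_i^{*}$) combined with the polynomial growth of $f_h$ and its iterated differences guarantees the integrability of all the expectations and remainder terms appearing above.
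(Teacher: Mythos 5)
Your proposal is correct and follows essentially the same route as the paper: the zero-bias decomposition over $i$, the discrete Taylor expansion in $X_i^*$, the reverse Taylor expansion of each $\esp[\Delta^k f_h(W^{(i)}+1)]$ back to $W$, the pairing of the $(k=\tilde{\mathbf J}^\dagger,\mathbf J=\tilde{\mathbf J}^\circ)$ and $(k=0,\mathbf J=\tilde{\mathbf J})$ contributions via the multiplicativity $m_{X_i}^{(\mathbf J)}=m_{X_i}^{(\mathbf J^\circ)}m_{X_i}^{(\mathbf J^\dagger)}$, and the application of the induction hypothesis to $\esp[\Delta^{|\mathbf J|}f_h(W+1)]$. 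The only cosmetic difference is that the paper performs the bookkeeping by the explicit index changes $\mathbf K=(\mathbf J,k)$, $u=d+1$, and justifies well-definedness via Proposition \ref{Lem:fundamental lemma} showing $\Delta^k f_h(x+1)\in\mathcal H_p$, which is the precise form of the integrability check you flag at the end.
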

\begin{Rem} In view of the similarity between the above theorem and \cite[Thm1.2]{gauss}, which has also been shown by the two papers \cite{Ba1986,Ba1987} of Barbour, the following question arises naturally: can we generalize the result to any infinitely divisible distribution?  \end{Rem}

\section{Several preliminary results}
In this section, we are interested in some properties
concerning the function $h$ and the associated function
$f_h$. Compared to the normal case, we no longer need
differentiability conditions on $h$  in Theorem
\ref{Thm:main theorem} and shall concentrate on its
increasing speed at infinity. This makes the study much
simpler.

We begin by considering the
modified Stein's equation on $\mathbb N_*$:
\begin{equation}\label{Equ:modified stein's equation}
x\widetilde f(x)-\lambda\widetilde
f(x+1)=h(x),\qquad x\in\mathbb N_*.
\end{equation}
The above equation may have many solutions, one of which
is given by
\[\widetilde f_h(x):=\frac{(x-1)!}{\lambda^x}\sum_{i=x}^\infty\frac{\lambda^i}{
i!}h(i).\] A general solution of
\eqref{Equ:modified stein's equation} can be
written as $\widetilde
f_h(x)+C(x-1)!/\lambda^x$, where $C$ is an
arbitrary constant. However, when $h$ is of
polynomial increasing speed at infinity,
$\widetilde f_h$ is the only solution of
\eqref{Equ:modified stein's equation} which
has polynomial increasing speed at infinity.

In order that the function $\widetilde f_h$
is well defined, we need some condition on
$h$. Denote by $\mathcal E_\lambda$ the space
of functions $h$ on $\mathbb N_*$ such that,
for any polynomial $P$, we have
\[\sum_{i\geq
1}\frac{\lambda^i}{i!}|h(i)P(i)|<+\infty.\]
Clearly $\mathcal E_\lambda$ is a linear
space. We list below some properties of $\mathcal E_\lambda$.
\begin{Pro}\label{Pro:proprietes de Elambda} The following assertions hold:
\begin{enumerate}[1)]
\item for any $Q\in\mathbb R[x,x^{-1}]$ and
any $h\in\mathcal E_\lambda$ where $\mathbb R[x,x^{-1}]$ denotes the set of Laurent polynomials on $\mathbb R$, we have $Qh\in\mathcal
E_\lambda$;
\item for any $h\in\mathcal E_\lambda$, $\Delta h\in\mathcal E_\lambda$ and
$\widetilde f_h\in\mathcal E_\lambda$;
\end{enumerate}
\end{Pro}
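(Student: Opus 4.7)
The plan is to reduce each claim, via absolute value and elementary manipulations, to a series of the form $\sum_{i\geq 1}\frac{\lambda^i}{i!}|h(i)\tilde P(i)|$ for a suitable ordinary polynomial $\tilde P$, and then invoke the definition of $\mathcal E_\lambda$ with $\tilde P$ as test polynomial.

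For assertion 1, the key observation is that on $\mathbb N_*$ any Laurent polynomial $Q\in\mathbb R[x,x^{-1}]$ satisfies $|Q(i)|\leq R(i)$ for some ordinary polynomial $R$, since the negative powers $i^{-k}$ are bounded by $1$ when $i\geq 1$. Thus for any polynomial $P$ one has $|Q(i)P(i)|\leq \tilde P(i)$ for some polynomial $\tilde P$, and $\sum_{i\geq 1}\frac{\lambda^i}{i!}|Q(i)h(i)P(i)|<+\infty$ follows immediately.

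For the first half of assertion 2, I split $|\Delta h(i)P(i)|\leq |h(i+1)P(i)|+|h(i)P(i)|$. The second term converges by hypothesis. For the first, an index shift $j=i+1$ yields
\[
\sum_{i\geq 1}\frac{\lambda^i}{i!}|h(i+1)P(i)|=\lambda^{-1}\sum_{j\geq 2}\frac{\lambda^j}{j!}\bigl|j\,h(j)P(j-1)\bigr|,
\]
and since $jP(j-1)$ is again an ordinary polynomial in $j$, this series converges by the definition of $\mathcal E_\lambda$. For the claim $\widetilde f_h\in\mathcal E_\lambda$, I use the explicit formula to obtain the pointwise bound
\[
\frac{\lambda^x}{x!}\bigl|\widetilde f_h(x)P(x)\bigr|\leq \frac{|P(x)|}{x}\sum_{i\geq x}\frac{\lambda^i}{i!}|h(i)|,
\]
and then swap the order of summation by Tonelli:
\[
\sum_{x\geq 1}\frac{\lambda^x}{x!}\bigl|\widetilde f_h(x)P(x)\bigr|\leq \sum_{i\geq 1}\frac{\lambda^i}{i!}|h(i)|\sum_{x=1}^i\frac{|P(x)|}{x}.
\]

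The main (mildly technical) step is to dominate the inner partial sum by an ordinary polynomial in $i$: since $|P(x)|/x$ is bounded on $\mathbb N_*$ by a polynomial $\tilde P(x)$ (of degree $\deg P-1$ when $\deg P\geq 1$, and by a constant otherwise), its partial sums $\sum_{x=1}^i\tilde P(x)$ are in turn bounded by some polynomial $Q(i)$. Applying the hypothesis $h\in\mathcal E_\lambda$ with test polynomial $Q$ then closes the argument. The $\widetilde f_h$ case is the only mild obstacle, because one has to justify the interchange of summation and bound the partial sum $\sum_{x=1}^i|P(x)|/x$ by a polynomial; the remaining assertions amount to routine bookkeeping with the definition of $\mathcal E_\lambda$.
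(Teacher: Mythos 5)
Your proposal is correct and follows essentially the same route as the paper: the same index shift $j=i+1$ for $\Delta h$, and for $\widetilde f_h$ the same pointwise bound $|P(x)|/x$ times the tail sum, interchange of summation, and domination of the partial sums $\sum_{x=1}^i|P(x)|/x$ by a polynomial $Q(i)$ used as the new test polynomial. The only difference is that you spell out assertion 1 (which the paper dismisses as obvious) and justify the summation interchange explicitly.
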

\begin{proof}
1) is obvious by definition.\\
2) Let $h_1$ be the function defined as
$h_1(x):=h(x+1)$. If $P$ is a polynomial, then
\[\sum_{i\geq 1}\frac{\lambda^i}{i!}\big|
P(i)h_1(i)\big|=\sum_{j\geq
2}\frac{\lambda^{j-1}}{(j-1)!}\big|P(j-1)h(j)\big|
=\lambda^{-1}\sum_{j\geq
2}\frac{\lambda^j}{j!}\big|jP(j-1)h(j)\big|<+\infty
\]
since $h\in\mathcal E_\lambda$. Therefore, $\Delta
h\in\mathcal E_\lambda$. We next prove the second
assertion. For any arbitrary polynomial $P$, there
exists another polynomial $Q$ such that, for any
integer $i\geq 1$,
$Q(i)\geq\sum_{j=1}^i\frac{|P(j)|}{j}$. Therefore
\[\begin{split}&\quad\;\sum_{a\geq 1}\frac{\lambda^a}{a!}
\Big|P(a)\frac{(a-1)!}{\lambda^a}
\sum_{i=a}^\infty\frac{\lambda^i}{i!}h(i)\Big|\leq\sum_{a\geq
1}\frac{|P(a)|}{a}\sum_{i=a}^\infty
\frac{\lambda^i}{i!}|h(i)|\\
&=\sum_{i\geq 1}\Big(\sum_{a=1}^i\frac{|P(a)|}{a}\Big)
\frac{\lambda^i}{i!}|h(i)| \leq\sum_{i\geq
1}Q(i)\frac{\lambda^i}{i!}|h(i)|<+\infty,
\end{split}\]which implies that $\widetilde f_h\in\mathcal E_\lambda$.
\end{proof}

For any function $h\in\mathcal E_\lambda$,
we define $\tau(h):\mathbb
N\rightarrow\mathbb R$ such that
$$\tau(h)(x)=h(x+1)/(x+1).$$ Note that for any
integer $k\geq 1$, one has
$\tau^k(h)(x)=x!h(x+k)/(x+k)!$. The proof of Proposition
\ref{Pro:proprietes de Elambda} shows that $\tau$ is
actually an endomorphism of $\mathcal E_\lambda$.

\begin{Lem}\label{Pro:difference of Steins equation}
Let $h\in\mathcal E_\lambda$.
Then
\begin{equation}\label{Equ:ptauhx}
\widetilde f_{\tau(h)}(x)=\widetilde f_h(x+1)/x.
\end{equation}
\end{Lem}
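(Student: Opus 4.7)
The plan is to prove the identity by direct substitution into the explicit formula for $\widetilde{f}_h$, followed by a shift of summation index. Proposition \ref{Pro:proprietes de Elambda} already guarantees that both sides belong to $\mathcal{E}_\lambda$ and in particular are well defined, so no convergence subtlety remains to be addressed.

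Concretely, I would start by substituting $\tau(h)(i) = h(i+1)/(i+1)$ into the definition of $\widetilde{f}_{\tau(h)}$, giving
\[
\widetilde{f}_{\tau(h)}(x) \;=\; \frac{(x-1)!}{\lambda^x}\sum_{i=x}^{\infty}\frac{\lambda^i}{i!}\cdot\frac{h(i+1)}{i+1}
\;=\; \frac{(x-1)!}{\lambda^x}\sum_{i=x}^{\infty}\frac{\lambda^i}{(i+1)!}h(i+1),
\]
using the telescoping identity $\frac{1}{i!(i+1)}=\frac{1}{(i+1)!}$. Next, the shift $j=i+1$ converts the sum into $\sum_{j=x+1}^\infty \lambda^{j-1}h(j)/j!$, and pulling out a factor $\lambda^{-1}$ yields
\[
\widetilde{f}_{\tau(h)}(x) \;=\; \frac{(x-1)!}{\lambda^{x+1}}\sum_{j=x+1}^{\infty}\frac{\lambda^j}{j!}h(j).
\]

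Finally I would compare this with the definition of $\widetilde{f}_h$ evaluated at $x+1$, namely $\widetilde{f}_h(x+1) = \frac{x!}{\lambda^{x+1}}\sum_{j=x+1}^\infty \frac{\lambda^j}{j!}h(j)$, and observe that the two expressions differ only by the ratio $(x-1)!/x! = 1/x$, which gives exactly \eqref{Equ:ptauhx}.

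There is really no obstacle here: the result is a mechanical manipulation of the explicit series defining $\widetilde{f}_h$, and the only assumption one uses, beyond the definitions, is $h \in \mathcal{E}_\lambda$ so that the rearrangement of the absolutely convergent sum is legitimate.
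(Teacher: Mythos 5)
Your computation is correct, but it takes a genuinely different route from the paper. You prove the identity by brute force on the defining series: substituting $\tau(h)(i)=h(i+1)/(i+1)$, absorbing the factor $1/(i+1)$ into the factorial, and shifting the index, which immediately exhibits $\widetilde f_{\tau(h)}(x)$ as $(x-1)!/x!=1/x$ times $\widetilde f_h(x+1)$. The paper instead argues through the modified Stein's equation \eqref{Equ:modified stein's equation}: it divides the equation by $x$, shifts $x\mapsto x+1$ to show that $u(x):=\widetilde f_h(x+1)/x$ satisfies $xu(x)-\lambda u(x+1)=\tau(h)(x)$, and then invokes uniqueness of the solution within $\mathcal E_\lambda$ (the homogeneous solution $C(x-1)!/\lambda^x$ falls outside $\mathcal E_\lambda$) to conclude $u=\widetilde f_{\tau(h)}$. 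Your approach is more elementary and self-contained --- the only input is absolute convergence of the series, which, as you note, follows from $\tau(h)\in\mathcal E_\lambda$ (the endomorphism property stated right after Proposition \ref{Pro:proprietes de Elambda}); it sidesteps the uniqueness claim entirely, as well as the need to check that $u$ itself lies in $\mathcal E_\lambda$. The paper's argument is more structural, treating $\widetilde f_h$ purely through the functional equation it solves rather than its closed form, which is the style that generalizes when no explicit series is available; but for this particular statement your direct verification is, if anything, the tighter proof.
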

\begin{proof} Let $u(x)=\widetilde f_h(x+1)/x$.
Dividing both sides of \eqref{Equ:modified
stein's equation} by $x$ and then replacing $x$
by $x+1$, we obtain $\widetilde
f_h(x+1)-\lambda{\widetilde
f_h(x+2)}/(x+1)=\tau(h)(x)$, or equivalently,
\begin{equation}\label{Equ:Stein's equation for u}xu(x)-\lambda
u(x+1)=\tau(h)(x).\end{equation} Since $\widetilde
f_{\tau(h)}$ is the only solution of \eqref{Equ:Stein's
equation for u} in $\mathcal E_\lambda$, the
lemma is proved.
\end{proof}

\begin{Cor}\label{Cor:estimation de ph}
Let $p\in\mathbb R$. If
$h(x)=O( x^p)$, then $\widetilde
f_h(x)=O( x^{p-1})$.
\end{Cor}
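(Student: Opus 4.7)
The natural approach is to work directly from the explicit formula
\[
\widetilde f_h(x)=\frac{(x-1)!}{\lambda^x}\sum_{i=x}^\infty\frac{\lambda^i}{i!}h(i).
\]
From the hypothesis $h(x)=O(x^p)$, fix $C>0$ and an integer $x_0$ such that $|h(i)|\le C i^p$ for all $i\ge x_0$. For $x\ge x_0$ this bound applies to every summand, so the problem reduces to estimating the Poisson-weighted tail $T(x):=\sum_{i\ge x}\lambda^i i^p/i!$.

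The key observation is that $T(x)$ is asymptotically dominated by its first term. Computing the ratio of consecutive summands gives
\[
\frac{\lambda^{i+1}(i+1)^p/(i+1)!}{\lambda^i i^p/i!}=\frac{\lambda}{i+1}\Bigl(1+\frac{1}{i}\Bigr)^p,
\]
and since $(1+1/i)^p$ is uniformly bounded on $i\ge 1$ by $M:=2^{\max(p,0)}$, this ratio is at most $1/2$ as soon as $i+1\ge 2M\lambda$. Consequently, for $x$ large enough, $T(x)\le 2\lambda^x x^p/x!$, and substituting back yields
\[
|\widetilde f_h(x)|\le C\cdot\frac{(x-1)!}{\lambda^x}\cdot\frac{2\lambda^x x^p}{x!}=\frac{2Cx^p}{x}=2Cx^{p-1},
\]
which is the desired estimate. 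The drop from $x^p$ to $x^{p-1}$ thus comes algebraically from the ratio $(x-1)!/x!=1/x$, which matches the intuition that a Poisson variable conditioned to exceed $x$ concentrates near $x$.

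The main obstacle, such as it is, lies in justifying the uniform bound on $(1+1/i)^p$ when $p$ is a real (possibly non-integer) exponent: the function $i\mapsto(1+1/i)^p$ is monotone on $i\ge 1$ with limit $1$, so the elementary bound $M=2^{\max(p,0)}$ works. An alternative, slightly softer route would be to first prove the crude bound $\widetilde f_h(x)=O(x^p)$ by the same series estimate without sharp tail control, and then invoke the modified Stein equation \eqref{Equ:modified stein's equation} rewritten as $\widetilde f_h(x)=[h(x)+\lambda\widetilde f_h(x+1)]/x$ to upgrade it to $O(x^{p-1})$; the factor $1/x$ then comes for free on the right-hand side. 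Either route is self-contained and uses only the definition of $\widetilde f_h$.
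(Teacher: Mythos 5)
Your proof is correct, but it follows a genuinely different route from the paper's. The paper first bounds the normalized tail $\frac{x!}{\lambda^x}\sum_{i\ge x}\lambda^i/i!$ by a constant, deduces the case $p\le 0$ directly from the monotonicity bound $i^p\le x^p$ for $i\ge x$, and then reaches general $p$ by induction, using Lemma \ref{Pro:difference of Steins equation} (the identity $\widetilde f_{\tau(h)}(x)=\widetilde f_h(x+1)/x$ with $\tau(h)(x)=h(x+1)/(x+1)$) to lower the exponent by one at each step. You instead treat all real $p$ in a single stroke by showing that the weighted tail $\sum_{i\ge x}\lambda^i i^p/i!$ is dominated by twice its first term once $x\ge 2\lambda\cdot 2^{\max(p,0)}$, via the ratio test and geometric comparison, after which the gain of one power comes from $(x-1)!/x!=1/x$; your uniform bound $(1+1/i)^p\le 2^{\max(p,0)}$ is the only point needing care and you handle it correctly. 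Your argument is self-contained, produces an explicit constant ($2C$ for large $x$), and sidesteps the paper's slightly informal ``induction on $p$'' over a real parameter, which is really an induction on the number of applications of $\tau$; what the paper's route buys is that it reuses the operator $\tau$ and Lemma \ref{Pro:difference of Steins equation} already set up in that section, and requires only the crude constant bound on the Poisson tail rather than the sharper first-term domination. Both arguments are sound.
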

\begin{proof}
First of all,
\[0\leq \frac{x!}{\lambda^x}\sum_{i\geq x}
\frac{\lambda^i}{i!}=\sum_{i\geq
x}\frac{\lambda^{i-x}}{i!/x!}\leq\sum_{i\geq
x}\frac{\lambda^{i-x}}{ (i-x)!}=e^{-\lambda}.\]
Therefore, when $p\leq 0$, one has
\[x\widetilde f_h(x)=\frac{x!}{\lambda^x}\sum_{i=x}^\infty
\frac{\lambda^i}{i!}h(i)=O( x^p)\]since $i^p\leq x^p$
if $x\leq i$. Hence $\widetilde f_h(x)=O( x^{p-1})$.
The general case follows by induction on $p$ by using
\eqref{Equ:ptauhx}.
\end{proof}

We now introduce the function space:
for any $p\in\mathbb R$, denote by $\mathcal H_p$
the space of all functions $h:\mathbb
N\rightarrow\mathbb R$ such that $h(x)=O(x^{p})$
when $x\rightarrow\infty$. In the following are some
simple properties of $\mathcal H_p$, their proofs are direct.

\begin{Pro}\label{Pro:basic properties}
\begin{enumerate}[1)]
\item For any $p\geq 0$ and any $h\in\mathcal H_p$,
the restriction of $h$ on $\mathbb N_*$ lies
in $\bigcap_{\lambda>0} \mathcal E_\lambda$.
\item If $h\in\mathcal H_p$, then also are
$h(x+1)$ and $\Delta h$.
\item If $h\in\mathcal H_p$ and $g\in\mathcal
H_q$, then $gh\in\mathcal
H_{p+q}$.
\end{enumerate}
\end{Pro}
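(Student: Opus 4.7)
The plan is to verify each of the three assertions directly from the definitions of $\mathcal H_p$ and $\mathcal E_\lambda$, as the author indicates that all three are routine.

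For item 1), the key observation is that for any $\lambda > 0$ and any exponent $q \geq 0$, the series $\sum_{i \geq 1} \lambda^i i^q / i!$ converges (since $\lambda^i i^q / i!$ decays super-exponentially; equivalently, all polynomial-growth moments of the Poisson distribution are finite, or simply by the ratio test). Now if $h \in \mathcal H_p$ with $p \geq 0$ and $P$ is an arbitrary polynomial, then $|h(i) P(i)| = O(i^{p + \deg P})$ as $i \to \infty$, so the convergence above applied with $q$ any integer exceeding $p + \deg P$ yields $\sum_{i \geq 1} \lambda^i |h(i) P(i)| / i! < +\infty$. This establishes $h|_{\mathbb N_*} \in \mathcal E_\lambda$ for every $\lambda > 0$.

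For item 2), the elementary bound $(x+1)^p = x^p(1+1/x)^p \leq 2^{|p|} x^p$ for $x \geq 1$ immediately gives $h(x+1) \in \mathcal H_p$, and then $|\Delta h(x)| \leq |h(x+1)| + |h(x)| = O(x^p)$ by the triangle inequality. For item 3), multiplying the defining bounds $|h(x)| \leq C_1 x^p$ and $|g(x)| \leq C_2 x^q$ valid for $x$ large enough gives $|g(x) h(x)| \leq C_1 C_2 x^{p+q}$, whence $gh \in \mathcal H_{p+q}$.

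No genuine obstacle is expected here: each item reduces to a one-line estimate, and the only minor subtlety is that $p$ or $q$ may be negative or non-integer, which is uniformly handled by the crude inequality $(1+1/x)^p \leq 2^{|p|}$ for $x \geq 1$. This matches the author's remark that the proofs are direct, so the whole proposition amounts essentially to unwinding the definitions.
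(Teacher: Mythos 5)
Your verification is correct, and it is exactly the routine unwinding of definitions that the paper has in mind when it omits the proof with the remark that the assertions are direct (the convergence of $\sum_i \lambda^i i^q/i!$ for item 1, the bound $(1+1/x)^p\leq 2^{|p|}$ for item 2, and multiplication of the $O$-bounds for item 3). No discrepancy with the paper's approach.
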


The following proposition is essential for applying the recursive estimation procedure.

\begin{Pro}\label{Lem:fundamental lemma}
Let $p\geq 0$. If $h\in\mathcal H_p$, then
$f_h\in\mathcal H_{p}$.
\end{Pro}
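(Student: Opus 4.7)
My plan is to reduce the proposition to Corollary~\ref{Cor:estimation de ph} by rewriting $f_h$ in terms of the solution $\widetilde f$ of the modified Stein's equation. Comparing \eqref{Equ:stein'sequation} with \eqref{Equ:modified stein's equation}, I see that the original Stein's equation is exactly the modified one for the shifted right-hand side $h-\mathcal P_\lambda(h)$; moreover, the explicit formula \eqref{equ:solution stein} defining $f_h$ coincides with the expression for $\widetilde f_{h-\mathcal P_\lambda(h)}$. This yields the structural identity $f_h=\widetilde f_{h-\mathcal P_\lambda(h)}$, which is what drives the whole argument.

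Next, I would verify that this shifted function inherits the polynomial growth of $h$. Since $p\geq 0$, every constant function lies in $\mathcal H_p$, so $h-\mathcal P_\lambda(h)\in\mathcal H_p$ by the linearity built into Proposition~\ref{Pro:basic properties}. Its restriction to $\mathbb N_*$ then belongs to $\mathcal E_\lambda$ by Proposition~\ref{Pro:basic properties}.1, which guarantees that the series defining $\widetilde f_{h-\mathcal P_\lambda(h)}$ converges and so the identity from the previous step is well posed.

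With these preparations, the final step is to apply Corollary~\ref{Cor:estimation de ph} to the shifted function $h-\mathcal P_\lambda(h)\in\mathcal H_p$: this yields $\widetilde f_{h-\mathcal P_\lambda(h)}(x)=O(x^{p-1})$. Since $p\geq 0$, one has $x^{p-1}=O(x^{p})$, hence $f_h(x)=O(x^{p})$, i.e.\ $f_h\in\mathcal H_p$, as required.

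I do not anticipate a real obstacle: all the required machinery (the explicit Stein solution, the tail estimate of Corollary~\ref{Cor:estimation de ph}, and the closure properties of $\mathcal H_p$ and $\mathcal E_\lambda$) is already in place from earlier in the section. The only point worth making explicit in the write-up is that the corollary actually produces the strictly better conclusion $f_h\in\mathcal H_{p-1}$, and this extra room is precisely what makes the argument go through without any further work.
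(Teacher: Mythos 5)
Your proposal is correct and follows exactly the paper's own argument: identify $f_h$ with $\widetilde f_{\overline h}$ for $\overline h=h-\mathcal P_\lambda(h)$, note $\overline h\in\mathcal H_p$, and invoke Corollary~\ref{Cor:estimation de ph} to get $f_h(x)=O(x^{p-1})=O(x^p)$. Nothing further is needed.
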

\begin{proof}
Note that $f_h$ coincides with $\widetilde f_{\overline
h }$ on $\mathbb N_*$ where $\overline h=h-\mathcal
P_\lambda(h)$. Since $h\in\mathcal H_p$, also is
$\overline h$. Then Corollary \ref{Cor:estimation de
ph} implies $f_h(x)=O(x^{p-1})=O(x^{p})$.
\end{proof}

\section{Proof of the main result}
In this section, we give the proof of
Proposition \ref{Lem:reverse taylor} and of Theorem \ref{Thm:main theorem}, which are essentially the same with the ones of \cite[Prop1.1, Thm1.2]{gauss} in a discrete setting.

\def\skip{
\begin{proof}[Proof of
We proceed by induction on $N$. When $N=0$, the
assertion follows from the equality
\[f(x+y)-f(x)=\sum_{0\leq j<y}\Delta f(x+j).\]
Assume that the assertion has been proved for
$0,\cdots,N-1$ ($N\geq 1$). Then
\[\begin{split}
f(x+y)&=\sum_{j=0}^{N-1}\binom{y}{j}\Delta^jf(x)+\sum_{0\leq
j_2<\cdots<j_{N+1}<y }\Delta^Nf(x+j_2)\\
&=\sum_{j=0}^{N-1}\binom{y}{j}\Delta^jf(x)+\sum_{0\leq
j_2<\cdots<j_{N+1}<y }\Big(\Delta^Nf(x)+\sum_{0\leq
j_1<j_2 }\Delta^{N+1}f(x+j_1)\Big)\\
&=\sum_{j=0}^N\binom{y}{j}\Delta^jf(x)+\sum_{0\leq
j_1<\cdots<j_{N+1}<k}\Delta^{N+1}f(x+j_1).
\end{split}\]
\end{proof}
}

\begin{proof}[Proof of Proposition \ref{Lem:reverse taylor}]
We replace $\esp[\Delta^{|\mathbf{J}|}f(X+Y)]$ on
the right side of \eqref{Equ:reverse taylor fomrul} by
\[\sum_{k=0}^{N-|\mathbf{J}|}m_Y^{(k)}\esp[\Delta^{|\mathbf J|+k}f(X)]
+\delta_{N-|\mathbf{J}|}(\Delta^{|\mathbf{J}|}f,X,Y)\]
and observe that the sum of terms containing $\delta$
vanishes with $\varepsilon_N(f,X,Y)$. Hence the right
side of \eqref{Equ:reverse taylor fomrul} equals
\[\sum_{d\geq
0}(-1)^d\hspace{-4mm}\sum_{\mathbf{J}\in\mathbb
N^d_*,\, |\mathbf{J}|\leq N
}\hspace{-4mm}m_Y^{(\mathbf{J})}\sum_{k=0}^{N-|\mathbf{J}|}
m_Y^{(k)}\esp[\Delta^{|\mathbf{J}|+k}f(X)]
\]
If we split the terms for $k=0$ and for $1\leq k\leq
N-|\mathbf{J}|$ respectively, the above formula can be
written as
\begin{equation}\label{Equ:proof of retf}\sum_{d\geq
0}(-1)^d\hspace{-4mm}\sum_{\mathbf{J}\in\mathbb
N^d_*,\, |\mathbf{J}|\leq N
}\hspace{-4mm}m_Y^{(\mathbf{J})}\esp[\Delta^{
|\mathbf{J}|}f(X)] +\sum_{d\geq
0}(-1)^d\hspace{-4mm}\sum_{\mathbf{J}\in\mathbb
N^d_*,\, |\mathbf{J}|\leq N
}\hspace{-4mm}m_Y^{(\mathbf{J})}\sum_{k=1}^{N-|\mathbf{J}|}
m_Y^{(k)}\esp[ \Delta^{|\mathbf{J}|+k}f(X)].
\end{equation}
We make the index changes
$\mathbf{J}'=(\mathbf{J},k)$ and $u=d+1$ in the second
part of \eqref{Equ:proof of retf} and find that it is nothing but
\[\sum_{u\geq 1}(-1)^{u-1}\hspace{-4mm}
\sum_{\mathbf{J'}\in\mathbb N^u_*,\, |\mathbf{J'}|\leq
N
}\hspace{-4mm}m_Y^{(\mathbf{J'})}\esp[\Delta^{|\mathbf{J'}|}f(X)].\]
By taking the sum, it only remains the term of index
$d=0$ in the first part of \eqref{Equ:proof of retf}, which is equal to $\esp[f(X)]$. So the lemma is proved.
\end{proof}

\begin{proof}[Proof of Theorem \ref{Thm:main theorem}] We prove the theorem by
induction on $N$. The case where $N=0$ is trivial.
Assume that the
assertion holds for $0,\cdots,N-1$.\\
Since $h\in\mathcal H_p$, by
Lemma \ref{Lem:fundamental lemma} and
Proposition \ref{Pro:basic properties} 2),
for any $k\in\{1,\cdots,N\}$,
$\Delta^{k}f_h(x+1)\in\mathcal
H_{p-1}\subset\mathcal H_p$.
Therefore $C_{N-k}(\Delta^{k}f_h(x+1))$ and
$e_{N-k}(\Delta^{k}f_h(x+1))$ are well
defined and
\[\esp[\Delta^{k}f_h(W+1)]=C_{N-k}(\Delta^{k}f_h(x+1))+e_{N-k}(\Delta^{k}f_h(x+1)).\]
We now prove the equality $\esp[h(W)]=C_N(h)+e_N(h)$. Recall that for
any $i\in\{1,\cdots,n\}$, $X_i^*$ follows the Poisson $X_i$-zero biased distribution
and is independent of $W^{(i)}=W-X_i$, $I$ is an independent random index such that
$\proba(I=i)=\lambda_i/\lambda_W$, and $W^*=W^{(I)}+X_I^*$.
So $\esp[h(W)]-C_0(h)$ is equal to
\[\lambda_W\esp[f_h(W^*+1)-f_h(W+1)]=\sum_{i=1}^n
\lambda_i\Big(\esp[f_h(W^{(i)}+X_i^*+1)]-\esp[f_h(W+1)]\Big),\]
where, by using \eqref{Equ:discrete taylor formula},
\[\esp[f_h(W^{(i)}+X_i^*+1)]=\sum_{k=0}^Nm_{X_i^*}^{(k)}
\esp[\Delta^kf_h(W^{(i)}+1)]+\delta_N(f_h(x+1),W^{(i)},X_i^*).\]
By replacing $\esp[\Delta^kf_h(W^{(i)}+1)]$ in the
above formula by its $(N-k)^{\mathrm{th}}$ order
reverse Taylor expansion, we obtain that
$\esp[f_h(W^{(i)}+X_i^*+1)]$ equals
\[\sum_{k=0}^Nm_{X_i^*}^{(k)}\bigg[\sum_{d\geq 0}
(-1)^d\hspace{-2mm}\sum_{\begin{subarray}{c}\mathbf{J}\in\mathbb
N_*^d\\|\mathbf{J}|\leq
N-k\end{subarray}}\hspace{-2mm}m_{X_i}^{(\mathbf{J})}
\esp[\Delta^{|\mathbf{J}|+k}f_h(W+1)]+\varepsilon_{N-k}
(\Delta^{k}f_h(x+1),W^{(i)},X_i)\bigg]+
\delta_N(f_h(x+1),W^{(i)},X_i^*).\] Note that the term
of indices $k=d=0$ in the sum is $\esp[f_h(W+1)]$.
Therefore, $\esp[f_h(W^{(i)}+X_i^*+1)]-\esp[f_h(W+1)]$
is the sum of the following three terms
\begin{gather}\label{Equ:first part}
\sum_{k=1}^N m_{X_i^*}^{(k)}\sum_{d\geq 0}
(-1)^d\hspace{-4mm}\sum_{\mathbf{J}\in\mathbb
N_*^d,\,|\mathbf{J}|\leq
N-k}\hspace{-4mm}m_{X_i}^{(\mathbf{J})}
\esp[\Delta^{|\mathbf{J}|+k}f_h(W+1)],\\
\label{Equ:second part}\sum_{d\geq 1}
(-1)^d\hspace{-4mm}\sum_{\mathbf{J}\in\mathbb
N_*^d,\,|\mathbf{J}|\leq
N}\hspace{-4mm}m_{X_i}^{(\mathbf{J})}
\esp[\Delta^{|\mathbf{J}|}f_h(W+1)],\\
\label{Equ:third part}
\sum_{k=0}^Nm_{X_i^*}^{(k)}\varepsilon_{N-k}(\Delta^kf_h
(x+1),W^{(i)},X_i) +\delta_N(f_h(x+1),W^{(i)},X_i^*).
\end{gather}
By interchanging summations and then making the index
changes $\mathbf{K}=(\mathbf{J},k)$ and $u=d+1$, we
obtain
\[\eqref{Equ:first part}=\sum_{u\geq 1}
(-1)^{u-1}\hspace{-4mm}\sum_{\mathbf{K}\in\mathbb
N_*^u,\,|\mathbf{K}|\leq
N}\hspace{-4mm}m_{X_i}^{(\mathbf{K^\circ})}m_{X_i^*}^{(\mathbf{K}^\dagger)}
\esp[\Delta^{|\mathbf{K}|}f_h(W+1)].
\] As the equality $m_{X_i}^{(\mathbf{J})}=m_{X_i}^{(\mathbf{J}^\circ)}
m_{X_i}^{\mathbf{J}^\dagger}$ holds for any
$\mathbf{J}$, \eqref{Equ:first part}+\eqref{Equ:second
part} simplifies as
\[\sum_{d\geq 1}
(-1)^{d-1}\hspace{-4mm}\sum_{\mathbf{J}\in\mathbb
N_*^d,\,|\mathbf{J}|\leq
N}\hspace{-4mm}m_{X_i}^{(\mathbf{J^\circ})}
\Big(m_{X_i^*}^{(\mathbf{J}^\dagger)}-m_{X_i}^{(\mathbf{J}^\dagger)}\Big)
\esp[\Delta^{|\mathbf{J}|}f_h(W+1)].\] By the
hypothesis of induction, we have
\[\esp[\Delta^{|\mathbf{J}|}f_h(W+1)]=C_{N-|\mathbf{J}|}(
\Delta^{|\mathbf{J}|}f_h(x+1))+e_{N-|\mathbf{J}|}(
\Delta^{|\mathbf{J}|}f_h(x+1)),\] so the equality
$\esp[h(W)]=C_N(h)+e_N(h)$ holds with $C_N(h)$ and $e_N(h)$ being defined in
\eqref{Equ:recursive CN} and \eqref{Equ:recursive eN}.

\end{proof}

\section{Error estimations}
In this section, we concentrate on the remainder
$e_N(h)$ in the asymptotic expansion. The following
quantity will be useful. Let $p\geq 0$. For
$h\in\mathcal H_p$ and $N\in\mathbb N $, we define
\begin{equation}\label{norm}\|h\|_{N,p}:=\sup_{x\in\mathbb N_*}\frac{\big|\Delta^{N+1}h(x)\big|}
{x^p},\end{equation} which is finite by Proposition
\ref{Pro:basic properties} 2).
\begin{Lem}
Let $N\in\mathbb N$, $k\in\{0,\cdots,N\}$ and $p\geq
0$. Let $X$ be an $\mathbb N$-r.v. with $p^{\text{th}}$
order moment, $Y$ be an $\mathbb N$-r.v. independent of
$X$ and having $(N-k+1+p)^{\text{th}}$ order moment.
Then, for any $f\in\mathcal H_p$, the following
inequalities hold:
\begin{equation}\label{Equ:estimation de delta}
|\delta_{N-k}(\Delta^kf(x+1),X,Y)|\leq \max(2^{p-1},1)
\|f\|_{N,p} \big(\mathbb E[X^p]m_Y^{(N-k+1)} +
m_Y^{(N-k+1),p}\big),\end{equation} where
$$\displaystyle m_Y^{(N-k+1),p}:=\mathbb
E\Big[\binom{Y}{N-k+1}Y^p\Big].$$ The discrete reverse
Taylor remainder satisfies
\begin{equation}\label{Equ:estimation de epsilon}\begin{split}
&\quad\big|\varepsilon_{N-k}(\Delta^kf(x+1),X,Y)\big|
\\&\leq \max(2^{p-1},1) \|f\|_{N,p}\sum_{d\geq
0}\,\sum_{\begin{subarray} {c} \mathbf{J} \in\mathbb
N^d_*\\|\mathbf{J}|\leq
N-k\end{subarray}}m_{Y}^{(\mathbf{J})}\big(\mathbb
E[X^p]m_Y^{(N-k-|\mathbf{J}|+1)}+
m_Y^{(N-k-|\mathbf{J}|+1),p}\big)\end{split}.
\end{equation}
\end{Lem}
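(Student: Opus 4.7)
The plan is to unwind the definitions of $\delta_{N-k}$ and $\varepsilon_{N-k}$ and reduce everything to pointwise bounds coming from the seminorm $\|f\|_{N,p}$.

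First, I would handle \eqref{Equ:estimation de delta}. By \eqref{Equ:calcul de Delta}, setting $g(x)=\Delta^k f(x+1)$, one has
\[
\delta_{N-k}(g,X,Y)=\esp\Big[\sum_{0\leq j_1<\cdots<j_{N-k+1}<Y}\Delta^{N-k+1}g(X+j_1)\Big],
\]
and $\Delta^{N-k+1}g(x)=\Delta^{N+1}f(x+1)$. Since $X+j_1+1\geq 1$, the definition of the seminorm gives $|\Delta^{N+1}f(X+j_1+1)|\leq \|f\|_{N,p}(X+j_1+1)^p$. The elementary inequality $(a+b)^p\leq\max(2^{p-1},1)(a^p+b^p)$ (convexity when $p\geq 1$, subadditivity when $0\leq p\leq 1$) then yields
\[
(X+j_1+1)^p\leq \max(2^{p-1},1)\bigl(X^p+(j_1+1)^p\bigr).
\]
Since $j_1<Y$ implies $(j_1+1)^p\leq Y^p$, and the number of strictly increasing $(N-k+1)$-tuples in $\{0,\dots,Y-1\}$ is $\binom{Y}{N-k+1}$, I can bound the inner sum by $\max(2^{p-1},1)\binom{Y}{N-k+1}(X^p+Y^p)$. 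Taking expectations and using the independence of $X$ and $Y$ produces the two terms $\esp[X^p]\,m_Y^{(N-k+1)}$ and $m_Y^{(N-k+1),p}$, which is exactly \eqref{Equ:estimation de delta}.

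For \eqref{Equ:estimation de epsilon}, I would plug into \eqref{Equ:relation between esp and dleta} with $f$ replaced by $\Delta^k f(x+1)$, obtaining
\[
\varepsilon_{N-k}(\Delta^k f(x+1),X,Y)=-\sum_{d\geq 0}(-1)^d\hspace{-4mm}\sum_{\mathbf{J}\in\mathbb N_*^d,\,|\mathbf{J}|\leq N-k}\hspace{-4mm}m_Y^{(\mathbf{J})}\,\delta_{N-k-|\mathbf{J}|}\bigl(\Delta^{k+|\mathbf{J}|}f(x+1),X,Y\bigr).
\]
The triangle inequality reduces the estimate to a sum of $|\delta|$-terms, and for each one I apply \eqref{Equ:estimation de delta} with $k$ replaced by $k+|\mathbf{J}|$ (which preserves $N+1$, so the same $\|f\|_{N,p}$ appears). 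Collecting the resulting bounds gives the claimed \eqref{Equ:estimation de epsilon}.

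The only non-routine point is confirming that the seminorm factor stays $\|f\|_{N,p}$ throughout, regardless of how one shuffles $k$ and $|\mathbf{J}|$: this works precisely because $(N-k)+(k+|\mathbf{J}|)+1=N-|\mathbf{J}|+|\mathbf{J}|+1=N+1$ total differences are applied to $f$, so one hits exactly $\Delta^{N+1}f(\cdot+1)$ in every summand. The rest is bookkeeping with $\binom{Y}{\cdot}$ and the $(a+b)^p$ inequality.
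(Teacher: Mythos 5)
Your proposal is correct and follows essentially the same route as the paper: bound $\Delta^{N-k+1}(\Delta^k f(\cdot+1))=\Delta^{N+1}f(\cdot+1)$ by the seminorm, count the increasing tuples by $\binom{Y}{N-k+1}$, split $(X+Y)^p$ with the $\max(2^{p-1},1)$ inequality, and deduce the $\varepsilon$ bound from the $\delta$ bound via \eqref{Equ:relation between esp and dleta} and the triangle inequality. The only (immaterial) difference is that you apply the $(a+b)^p$ inequality termwise before summing over the tuples, whereas the paper first bounds the sum by $\binom{Y}{N-k+1}(X+Y)^p$ and splits afterwards.
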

\begin{proof}
By definition \eqref{Equ:calcul de Delta} and \eqref{norm},
\[\begin{split}&\quad\big|\delta_{N-k}(\Delta^kf(x+1),X,Y)\big|\leq
\esp\big[\sum_{0\leq
j_1<\cdots<j_{N-k+1}<Y}\big|\Delta^{N+1}f(X+1+j_1)\big|\big]\\
&\leq \|f\|_{N,p}\,\esp\big[\sum_{0\leq
j_1<\cdots<j_{N-k+1}<Y}(X+j_1+1)^p\big] \leq
\|f\|_{N,p}\,\mathbb
E\Big[\binom{Y}{N-k+1}(X+Y)^p\Big]\\
&\leq \max(2^{p-1},1) \|f\|_{N,p} \big(\mathbb
E[X^p]m_Y^{(N-k+1)} + m_Y^{(N-k+1),p}\big),
\end{split}\]
where we have used in the last inequality the
estimations $(X+Y)^p\leq 2^{p-1}(X^p+Y^p)$ if $p>1$ and
$(X+Y)^p\leq X^p+Y^p$ if $p\leq 1$. Thus
\eqref{Equ:estimation de delta} is proved. The
inequality \eqref{Equ:estimation de epsilon} follows
from \eqref{Equ:relation between esp and dleta} and
\eqref{Equ:estimation de delta}.
\end{proof}

\begin{Pro}\label{Thm:estimation} Let $N\in\mathbb N$, $p\geq 0$ and $
h\in\mathcal H_p$. Let $X_i\,(i=1,\cdots, n)$ be a
family of independent $\mathbb N$-r.v.s  with mean
$\lambda_i>0$ and up to $(N+p+1)^{\text{th}}$ order
moments; $W=X_1+\cdots+X_n$. Let $X_i^*$ be an $\mathbb
N$-r.v. having Poisson $X_i$-zero biased distribution
and independent of $W^{(i)}:=W-X_i$. Then the following
estimations hold.
\begin{enumerate}[1)]\item
When $N=0$,
\begin{equation}\label{Equ:estimation de
e0} |e_0(h)|\leq \max(2^{p-1},1)\|f_h\|_{0,p}
\sum_{i=1}^n\Big(
\esp[(W^{(i)})^p]\big(\esp[X_i^2]+\lambda_i^2-\lambda_i\big)
+\lambda_i\big(\esp[(X_i^*)^{p+1}]+\esp[(X_i)^{p+1}]\big)\Big).
\end{equation}
\item When $N\geq 1$, one has the
recursive estimation:
\begin{equation}\label{Equ:recursive esti}
\begin{split}
|e_N&(h)|\leq\sum_{i=1}^n\lambda_i\bigg[ \sum_{d\geq
1}\sum_{\substack{ \mathbf{J}\in\mathbb N_*^d\\
|\mathbf{J}|\leq N }}m_{X_i}^{(\mathbf
J^\circ)}\big(m_{X_i^*}^{(\mathbf
J^\dagger)}+m_{X_i}^{(\mathbf{J}^\dagger)}\big)|e_{N-|\mathbf{J}|}
(\Delta^{|\mathbf{J}|}f_h(x+1))|\\
& +\max(2^{p-1},1) \|f_h\|_{N,p}\sum_{k=0}^N
m_{X_i^*}^{(k)}\sum_{d\geq
0}\sum_{\substack{\mathbf{J}\in\mathbb
N^d_*\\|\mathbf{J}|\leq
N-k}}m_{X_i}^{(\mathbf{J})}\,\big(\mathbb
E[(W^{(i)})^p]m_{X_i}^{(N-k-|\mathbf{J}|+1)}+
m_{X_i}^{(N-k-|\mathbf{J}|+1),p}\big)\\
& +\max(2^{p-1},1) \|f_h\|_{N,p}\, \big(\mathbb
E[(W^{(i)})^p]m_{X_i^*}^{(N+1)} +
m_{X_i^*}^{(N+1),p}\big)\bigg],
\end{split}\end{equation}\end{enumerate}
\end{Pro}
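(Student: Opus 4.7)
The plan is to handle the base case $N=0$ by a direct computation from the Stein--zero-bias identity and to obtain the recursive estimate for $N\geq 1$ by applying the triangle inequality to the closed-form expression \eqref{Equ:recursive eN} for $e_N(h)$, combined with the bounds \eqref{Equ:estimation de delta} and \eqref{Equ:estimation de epsilon} of the preceding lemma. In both cases Proposition \ref{Lem:fundamental lemma} guarantees $f_h\in\mathcal H_p$, so that $\|f_h\|_{N,p}$ and all the moments on the right-hand sides are finite under the assumption that the $X_i$ admit moments up to order $N+p+1$.

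For $N=0$, I would start from $e_0(h)=\lambda_W\esp[f_h(W^*+1)-f_h(W+1)]$ and use the representation $W^*=W^{(I)}+X_I^*$ together with $\proba(I=i)=\lambda_i/\lambda_W$ to rewrite this as $\sum_{i=1}^n\lambda_i\esp[f_h(W^{(i)}+X_i^*+1)-f_h(W^{(i)}+X_i+1)]$. Inserting $\pm\esp[f_h(W^{(i)}+1)]$ in each summand and recognising the two zero-th order Newton remainders yields
\[\esp[f_h(W^{(i)}+X_i^*+1)-f_h(W+1)]=\delta_0(f_h(x+1),W^{(i)},X_i^*)-\delta_0(f_h(x+1),W^{(i)},X_i).\]
The triangle inequality together with \eqref{Equ:estimation de delta} applied with $N=k=0$ (where $m_Y^{(1)}=\esp[Y]$ and $m_Y^{(1),p}=\esp[Y^{p+1}]$) produces four terms that I would regroup using the identity $\lambda_i(\esp[X_i^*]+\esp[X_i])=\esp[X_i^2]+\lambda_i^2-\lambda_i$, which follows from \eqref{zero bias} applied to $f(x)=x-1$. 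This gives exactly \eqref{Equ:estimation de e0}.

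For $N\geq 1$, I would apply the triangle inequality directly to \eqref{Equ:recursive eN}. Using $|m_{X_i^*}^{(\mathbf J^\dagger)}-m_{X_i}^{(\mathbf J^\dagger)}|\leq m_{X_i^*}^{(\mathbf J^\dagger)}+m_{X_i}^{(\mathbf J^\dagger)}$ (both factorial moments are non-negative) yields the first ``recursive'' block on the right-hand side of \eqref{Equ:recursive esti}. The middle block is controlled by applying \eqref{Equ:estimation de epsilon} with $X=W^{(i)}$, $Y=X_i$ and $f$ replaced by $f_h$; the key observation is that $\Delta^{N-k+1}(\Delta^k f_h(x+1))=\Delta^{N+1}f_h(x+1)$, so the norm that appears is uniformly $\|f_h\|_{N,p}$ for every $k\in\{0,\ldots,N\}$, as claimed. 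Finally, $\delta_N(f_h(x+1),W^{(i)},X_i^*)$ is bounded via \eqref{Equ:estimation de delta} with $k=0$, $X=W^{(i)}$, $Y=X_i^*$, giving the last block.

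I do not expect a real obstacle: the argument is essentially bookkeeping. The only mild subtleties are the algebraic rearrangement $\lambda_i\esp[X_i^*]=\esp[X_i^2]-\lambda_i$ in the $N=0$ case, and keeping track of the fact that the same norm $\|f_h\|_{N,p}$ controls every $\varepsilon$- and $\delta$-remainder appearing in the recursion, independently of the derivative order $k$ at which we apply the bounds.
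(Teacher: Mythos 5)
Your proposal is correct and follows essentially the same route as the paper: the $N=0$ case via the zero-bias representation $W^*=W^{(I)}+X_I^*$ and the zeroth-order remainders (your $-\delta_0(f_h(x+1),W^{(i)},X_i)$ is exactly the paper's $\varepsilon_0(f_h(x+1),W^{(i)},X_i)$), with the same factorial-moment identity $\lambda_i\esp[X_i^*]=\esp[X_i^2]-\lambda_i$, and the $N\geq 1$ case by the triangle inequality applied to \eqref{Equ:recursive eN} together with \eqref{Equ:estimation de delta} and \eqref{Equ:estimation de epsilon}. No gaps.
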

\begin{proof}
We begin by the case when $N=0$. By \eqref{equ:},
\[\begin{split}e_0(h)&=\sum_{i=1}^n\lambda_i\big(
\mathbb E[f_h(W^{(i)}+X_i^*+1)]-\mathbb
E[f_h(W^{(i)}+X_i+1)] \big)\\&=
\sum_{i=1}^n\lambda_i\big(\delta_0(f_h(x+1),W^{(i)},X_i^*)
+\varepsilon_0(f_h(x+1),W^{(i)},X_i)\big)
\\&\leq \max(2^{p-1},1)\|f_h\|_{0,p}
\sum_{i=1}^n\lambda_i\Big\{\mathbb
E[(W^{(i)})^p]\big(m_{X_i^*}^{(1)}+
m_{X_i}^{(1)}\big)+\big(m_{X_i^*}^{(1),p}+
m_{X_i}^{(1),p}\big)\Big\}
\end{split}\]
where the last inequality is by estimations
\eqref{Equ:estimation de delta} and
\eqref{Equ:estimation de epsilon}, so
\eqref{Equ:estimation de e0} follows. Combining in
addition the recursive formula \eqref{Equ:recursive
eN}, we obtain the inequality \eqref{Equ:recursive
esti}.
\end{proof}

\bibliography{jiao}

\begin{thebibliography}{10}

\bibitem{AGG1989}
R.~Arratia, L.~Goldstein, and L.~Gordon.
\newblock Two moments suffice for {P}oisson approximations: the {C}hen-{S}tein
  method.
\newblock {\em The Annals of Probability}, 17(1):9--25, 1989.

\bibitem{Ba1986}
A.~D. Barbour.
\newblock Asymptotic expansions based on smooth functions in the central limit
  theorem.
\newblock {\em Probability Theory and Related Fields}, 72:289--303, 1986.

\bibitem{Ba1987}
A.~D. Barbour.
\newblock Asymptotic expansions in the {P}oisson limit theorem.
\newblock {\em Annals of Probability}, 15(2):748--766, 1987.

\bibitem{BarCek2002}
A.~D. Barbour and V.~{\v{C}}ekanavi{\v{c}}ius.
\newblock Total variation asymptotics for sums of independent integer random
  variables.
\newblock {\em The Annals of Probability}, 30(2):509--545, 2002.

\bibitem{BCC1995}
A.~D. Barbour, L.~H.~Y. Chen, and K.~P. Choi.
\newblock Poisson approximation for unbounded functions. {I}. {I}ndependent
  summands.
\newblock {\em Statistica Sinica}, 5(2):749--766, 1995.

\bibitem{BHJ1992}
A.~D. Barbour, L.~Holst, and S.~Janson.
\newblock {\em Poisson Approximation}.
\newblock Oxford University Press, 1992.

\bibitem{BR2002}
I.~S. Borisov and P.~S. Ruzankin.
\newblock Poisson approximation for expectations of unbounded functions of
  independent random variables.
\newblock {\em Annals of Probability}, 30(4):1657--1680, 2002.

\bibitem{Chen1975}
L.~H.~Y. Chen.
\newblock Poisson approximation for dependent trials.
\newblock {\em Annals of Probability}, 3:534--545, 1975.

\bibitem{EJ2007}
N.~El~Karoui and Y.~Jiao.
\newblock Stein's method and zero bias transformation for {C}{D}{O}s tranches
  pricing.
\newblock {\em Finance and Stochastics}, 13(2):151--180, 2009.

\bibitem{Er2005}
T.~Erhardsson.
\newblock {S}tein's method for {P}oisson and compound {P}oisson approximation.
\newblock In {\em An Introduction to {S}tein's Method}, volume~4 of {\em
  Lecture Notes Series, IMS, National University of Singapore}, pages 61--113.
  Singapore University Press and World Scientific Publishing Co. Pte. Ltd.,
  2005.

\bibitem{GR1997}
L.~Goldstein and G.~Reinert.
\newblock {S}tein's method and the zero bias transformation with application to
  simple random sampling.
\newblock {\em Annals of Applied Probability}, 7:935--952, 1997.

\bibitem{gauss}
Y.~Jiao.
\newblock Zero bias transformation and asymptotic expansions.
\newblock Preprint, PMA Universit\' e Paris 6 - Universit\' e Paris 7, 2009.

\end{thebibliography}
\bibliographystyle{plain}
\end{document}